\newtheorem{theorem}{Theorem}[section]
\newtheorem{lemma}[theorem]{Lemma}
\newtheorem{proposition}[theorem]{Proposition}
\newtheorem{corollary}[theorem]{Corollary}
\theoremstyle{definition}
\newtheorem{definition}{Definition}[section]
\theoremstyle{remark}
\author{Wei Lin}
\date{}
\title{A MERIDIAN LEMMA FOR FULLY ALTERNATING LINKS IN THICKENED SURFACES}
\begin{document}
\maketitle

\begin{abstract}

Menasco showed that a closed surface in the complement of a non-split prime alternating link in $S^3$ contains a circle isotopic in the link complement to a meridian of the links. This result is known as the meridian lemma for alternating links. We give a meridian lemma for the class of fully alternating links in the thickened orientable surfaces. 

\end{abstract}

\section{Introduction}
The class of alternating knots and links is a beloved subject in knot theory. There are various generalizations of alternating links. The definition of fully alternating links in thickened surfaces is given in \hyperref[B3]{[3]} by Adams et al.:

\begin{definition}
Let $L$ be a link in a thickened surface $S\times I$, orientable or not, with the exception of the sphere and the projective plane. A projection of $L$ from $S\times I$ to $S$ is \emph{fully alternating} if it is alternating on $S$ and the interior of the closure of every complementary region is an open disk. We say a link $L$ is fully alternating in $S\times I$ if it has a fully alternating projection from $S\times I$ to $S$.\par

\end{definition}

Note that the assumption of alternatingness implies that in the case $S$ is orientable, each component must have at least two crossings. In the case $S$ is non-orientable, there could be a component with Möbius band neighborhood with one crossing. Throughout this paper, we use the same notation of \hyperref[B3]{[3]}. And we say a surface properly embedded in a compact manifold is \emph{essential} if it is incompressible and not boundary-parallel.\par

\subsection{Main Result}

Let $S$ be a closed orientable surface with positive genus, $L$ be a fully alternating link in $S\times I$. Adams et al. \hyperref[B3]{[3]} have shown that if $F\subset S\times I \setminus L$ is an essential torus, then $F$ contains a circle which is isotopic in $S\times I\setminus L$ to a meridian of $L$. Expanding Menasco's idea \hyperref[B16]{[16]}, we can simplify some parts of the proofs given in \hyperref[B3]{[3]}, and show a result that applies to arbitrary closed essential surfaces embedded in the same manifold structure: \par

\begin{theorem}\label{mfa}
Let $S$ be a closed orientable surface, $L$ be a fully alternating link in $S\times I$. If $F \subset S\times I\setminus L$ is a closed essential surface, then $F$ contains a circle which is isotopic in $S\times I\setminus L$ to a meridian of $L$.\par
\end{theorem}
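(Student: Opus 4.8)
The plan is to adapt Menasco's crossing-ball technology from $S^3$ to the thickened surface $S \times I$, replacing his projection sphere by the projection surface $\Sigma = S \times \{1/2\}$. First I would fix a fully alternating diagram with underlying $4$-valent graph $G = \pi(L) \subset S$, and replace each crossing by a small ball (a \emph{bubble}) whose equatorial disk lies on $\Sigma$, whose upper hemisphere carries the overstrand, and whose lower hemisphere carries the understrand. Then $\Sigma$ together with the bubbles cuts $S \times I$ into an upper region $N^+ \cong S \times [1/2,1]$ and a lower region $N^- \cong S \times [0,1/2]$, with $L$ lying on $\Sigma$ except inside the bubbles. I would next isotope the essential surface $F$ into standard position: $F \cap \Sigma$ is a finite collection of simple closed curves transverse to the bubble equators, $F$ meets each bubble in a disjoint union of saddles, and the pair (number of saddles, number of intersection curves) is lexicographically minimal.

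Next I would dispose of the degenerate cases. If $F$ meets no bubble, then $F \cap \Sigma$ lies in the complement of $G$ on $\Sigma$; since every complementary region is an open disk, each intersection curve bounds a disk in $\Sigma$ disjoint from $L$, and incompressibility of $F$ together with irreducibility of the complement lets me remove all of them, pushing $F$ entirely into $N^+$ or $N^-$. But a closed incompressible surface in the product $S \times I$ is horizontal, hence boundary-parallel, contradicting essentiality; so $F$ must carry at least one saddle. The same innermost-disk argument, applied to any intersection curve bounding a disk in $\Sigma \setminus G$, shows that in the minimal position no intersection curve is trivial in this sense, so every remaining curve is either essential on $\Sigma$ or encircles edges of $G$.

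The heart of the argument is then a local analysis at the bubbles, carried out exactly where Menasco used primeness and non-splitness but now powered by the fully alternating hypothesis. Cutting $F$ along $\Sigma$ yields surfaces $F^{\pm} = F \cap N^{\pm}$ whose boundaries run along the intersection curves and the saddle arcs on the bubbles. Because the diagram is alternating, the boundary of each saddle separates the overstrand from the understrand on the bubble sphere, so the surface must turn consistently as it passes through a crossing; tracing an outermost arc of $F^{+}$ (or $F^{-}$) inside a complementary disk region of $G$ then forces the surface to close up around a single edge of $G$. Such a curve pierces $\Sigma$ twice on opposite sides of one strand and bounds a disk meeting $L$ exactly once, that is, it is a meridian. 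The disk-region hypothesis guarantees that the outermost arc lies in a genuine disk, so this closing-up is unobstructed, while minimality of the saddle count rules out the competing configurations in which $F$ could instead be compressed or a saddle deleted.

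The main obstacle, and the essentially new feature compared with the classical $S^3$ case, is that $\Sigma$ has positive genus, so intersection curves may be essential on $\Sigma$ and the complementary disks glue up nontrivially around the handles. I expect the delicate point to be showing that such essential intersection curves do not let $F$ escape into a product collar without ever wrapping a strand: here I would use the full force of \emph{fully} alternating, namely that every complementary region is a disk, so that any arc of $F^{\pm}$ can be pushed until it is outermost in one of these disks, and I would use minimality of the saddle count to guarantee termination. Verifying that the resulting closed curve is genuinely isotopic to a meridian in $S \times I \setminus L$, rather than merely homologous to one, is the step I would treat most carefully, following the corresponding verification in \hyperref[B16]{[16]} and \hyperref[B3]{[3]}.
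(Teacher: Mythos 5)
Your setup --- the bubbles, the surfaces $S_\pm$ bounding the upper and lower pieces, and normal position minimizing the pair (saddles, intersection curves) --- and your handling of the saddle-free case agree with the paper's Section 2 and its Lemma~\ref{fasp} (every intersection curve can be assumed to meet a bubble; this is in fact the only place where the all-disk-regions hypothesis enters the paper's proof). But there is a genuine gap at the heart of your argument: you never establish any structure for the components of $F \cap M_\pm$, and that structure is exactly the new content needed in the thickened-surface setting. In $S^3$, Menasco's argument rests on the fact that the surface cut along the projection sphere consists of disks; here that is false in general, and the paper must prove (Proposition~\ref{bopa}) that each component of $F \cap M_\pm$ is incompressible in $M_\pm$ and boundary-parallel to $S_\pm$, using normal position together with Scharlemann's lemma on incompressible surfaces in compression bodies. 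Boundary-parallelism is what makes a \emph{lowest dome} --- a component whose cobordism with $S_+$ contains no other component --- well defined, and the entire meridian-finding mechanism runs on a lowest dome. Without it there is no innermost or outermost structure to trace along: a priori the pieces of $F$ above $S_+$ could be arbitrarily complicated surfaces.

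Second, the mechanism you propose --- tracing an outermost arc of $F^+$ inside a complementary disk region of $G$ until ``the surface closes up around a single edge of $G$'' --- is not how the meridian arises, and as stated it is not an argument. In the paper, the meridian appears at a crossing, not along an edge: if the cobordism $M_i^+$ under a lowest dome contains the overstrand of some bubble (the \emph{middle rectangle} of Lemma~\ref{rec}), then a saddle arc together with an arc on the dome encircles that overstrand, and this circle is the desired meridian. The complementary alternative is killed by alternation plus minimality: if $M_i^+$ avoids the overstrand at every bubble it meets, then the alternating property forces a curve of the lowest dome to pass the same side of some bubble twice with the overstrand on opposite sides (a \emph{sided rectangle}), whereupon the two saddles can be banded together and pushed out of the bubble, reducing the saddle count and contradicting normal position. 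Your sketch supplies neither this dichotomy nor the saddle-reduction move; nothing in an outermost-arc picture forces $F$ to wrap around an edge of $G$, and curves of $F \cap S_+$ need never do so. (A smaller point: you invoke irreducibility of $S \times I \setminus L$ to clean up trivial curves, but the paper cannot assume this --- it treats spheres as a separate case of the same argument and observes at the end that the proof also rules out essential spheres.)
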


For reader familiar with the arguments in \hyperref[B16]{[16]}, our approach to proving Theorem \ref{mfa} should be of interest. It also provides a similar but different way to prove the meridian lemma for alternating links \hyperref[B16]{[16]}. To be more specific, let $M=S\times I$, we cut $M$ and consider one of the submanifolds, say $M_+$. Then we consider the essential surface $F\subset M\setminus L$, and the subsurface components of $F \cap M_+$, which are called domes (please see \S  \ref{3} for formal definition). And we find that the ``nesting behavior'' will be imposed by domes instead of by simple closed curves in the arguments of \hyperref[B16]{[16]}. The assumption of an fully alternating projection then forces the manifesting of a meridian from the lowest dome.\par

In addition, the authors \hyperref[B3]{[3]} give a definition for prime links in thickened surfaces in comparison with the prime links in $S^3$: \par

\begin{definition}
A link $L$ is \emph{prime} in $S\times I$ if there does not exist an essential twice punctured sphere in $S\times I\setminus L$ such that both punctures are created by $L$. \par
\end{definition}

We place $F$ into \emph{normal position} (please see \S  \ref{2} for formal definition) with respect to $S_\pm$. One of the main goals for the application of such techniques in \hyperref[B16]{[16]}, or in some other generalizations (please see \hyperref[B1]{[1]}, \hyperref[B2]{[2]}, \hyperref[B3]{[3]}, and \hyperref[B4]{[4]}), is when one imposes the assumption that the projection $\pi(L)$ on the projection surface $S$ is ``alternating'', the normal position of an essential surface is exceedingly well-behaved to the point where by direct observation one can definitively state whether the link is split, prime, cabled or a satellite. In \hyperref[B3]{[3]}, the authors have shown that when $L$ is prime, there does not exist any essential annuli in $S\times I \setminus L$. Then since there are also no essential spheres or tori in $S\times I \setminus L$, they can give the following result by Thurston's Hyperbolization Theorem \hyperref[B21]{[21]}: \par

\begin{theorem}
Let $S$ be a closed orientable surface. A prime fully alternating link $L$ is $S \times I\setminus L$ is hyperbolic.\par

\end{theorem}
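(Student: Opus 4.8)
The plan is to realize $S\times I\setminus L$ as the interior of the compact orientable $3$-manifold $M=(S\times I)\setminus N(L)$, where $N(L)$ is an open tubular neighborhood, and then to invoke Thurston's Hyperbolization Theorem \hyperref[B21]{[21]} in its form for Haken manifolds with boundary: a compact orientable $3$-manifold that is irreducible, $\partial$-irreducible, atoroidal, anannular, and not an $I$-bundle over a closed surface carries a complete finite-volume hyperbolic structure, with the higher-genus boundary components $S\times\{0\}$ and $S\times\{1\}$ totally geodesic and the toroidal components $\partial N(L)$ realized as cusps. Since $L$ is prime, the theorem of Adams et al.\ \hyperref[B3]{[3]} quoted above already supplies the anannular condition, so the whole argument reduces to verifying that $M$ is irreducible, atoroidal, $\partial$-irreducible, and not an $I$-bundle. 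The first two are exactly where the meridian lemma (Theorem~\ref{mfa}) does the real work.

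First I would dispose of essential spheres. If $M$ were reducible, let $\Sigma$ be an essential sphere; by Theorem~\ref{mfa} it would contain a circle $c$ isotopic in $M$ to a meridian $m$ of $L$. But $c$ bounds one of the two disks into which it cuts $\Sigma$, hence bounds an embedded disk in $M$, which forces $m$ to bound a disk in $M$ as well. This is impossible, because a meridian is essential on $\partial N(L)$ and any disk it bounds in $S\times I$ must meet $L$. Equivalently, and more directly, the algebraic intersection number of any closed surface with any $1$-cycle vanishes in $S\times I$, since $H_2(S\times I)$ is carried by a horizontal level surface disjoint from horizontal representatives of $H_1$; thus no embedded sphere can meet a component of $L$ in a single transverse point. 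Hence $M$ is irreducible.

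Next I would rule out essential tori, again through Theorem~\ref{mfa}. An essential torus $T$ contains a circle $c$ isotopic to a meridian $m$, and the argument of the previous paragraph shows that $c$ cannot bound a disk on $T$, so $c$ is essential on $T$. Being a meridian, $c$ bounds a disk $D\subset S\times I$ meeting $L$ in a single point; using incompressibility of $T$ and irreducibility of $M$ I would remove all intersections of $D$ with $T$ other than $\partial D=c$ by an innermost-disk argument, leaving a once-punctured meridian disk meeting $T$ only along the essential curve $c$. This is a meridional compressing disk for $T$, and for a prime link an incompressible, meridionally compressible torus must be parallel to a component of $\partial N(L)$, hence boundary-parallel, contradicting essentiality; this is the precise analogue of Menasco's deduction in \hyperref[B16]{[16]}. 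Thus $M$ is atoroidal.

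Finally I would record the two remaining conditions. The surfaces $S\times\{0\}$ and $S\times\{1\}$ are incompressible, being the incompressible boundary of the $I$-bundle $S\times I$, since deleting the interior link cannot create a compressing disk; the tori $\partial N(L)$ are incompressible because $L$ is non-split and prime, so $M$ is $\partial$-irreducible. Moreover $M$ is not an $I$-bundle over a closed surface: it has at least three boundary components (the two copies of $S$ together with at least one torus from $L$), whereas an $I$-bundle over a closed surface has at most two, all of the same homeomorphism type. With irreducibility, atoroidality, anannularity, $\partial$-irreducibility, and the $I$-bundle exclusion all in hand, Thurston's theorem yields the hyperbolic structure. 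The main obstacle I anticipate is the torus step: making the innermost-disk cleanup rigorous and verifying that the punctured meridian disk genuinely forces boundary-parallelism, rather than merely compressibility, requires the same careful normal-position bookkeeping that underlies the proof of Theorem~\ref{mfa} itself.
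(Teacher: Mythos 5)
Your proposal follows essentially the same route as the paper: the paper also obtains this theorem (due to Adams et al.\ \hyperref[B3]{[3]}) by combining the absence of essential annuli (which primeness supplies, per \hyperref[B3]{[3]}) with the absence of essential spheres and tori coming from the meridian lemma, and then invoking Thurston's Hyperbolization Theorem \hyperref[B21]{[21]}. Your additional details---the intersection-number argument ruling out spheres, the meridional compression and primeness argument ruling out tori, and the checks of $\partial$-irreducibility and the $I$-bundle exclusion---are exactly the bookkeeping the paper delegates to \hyperref[B3]{[3]}, so the approaches agree.
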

\par

\subsection{Background}

Let $ L \subset S^3$ be a link and $\pi(L) \subset S^2 (\subset S^3)$ be its projection on $S^2$. In 1981 Menasco \hyperref[B16]{[16]} introduced his crossing ball technology for link projections  that replaced $\pi(L)$ in $S^2$ with two 2-spheres, $S^2_\pm$, which had the salient features that $L$ was embedded in $S^2_+ \cup S^2_-$ and $S^3 \setminus (S^2_+ \cup S^2_-)$ was a collection of open $3$-balls---$B^3_\pm$ that correspond to the boundaries $S^2_\pm $ and a collection of {\em crossing balls}.  (Please see \S  \ref{3} for formal definition.)  The techniques \hyperref[B16]{[16]} provide a way for arguing the existence of a meridional curve. In brief, once a surface $F$ is in ``normal position'', one can consider how the simple closed curves (s.c.c.'s) of $F\cap S^2_{\pm}$ impose a ``nesting behavior'' on a subset of s.c.c.'s in $F \cap S^2_\mp$, the assumption of an alternating projection forces the existence of some innermost s.c.c. from this subset manifesting a meridian.

One salient result from \hyperref[B16]{[16]} is that any essential surface in a non-split alternating link exterior will contain a meridional curve of a link component and, thus, studying such essential surfaces can be reduced to studying essential surfaces with meridional boundary curves that are \emph{meridionally incompressible}, or equally in some literature, pairwise incompressible.  The importance of studying meridionally incompressible surfaces has been reflected in the work of numerous scholars. To name a few, Bonahon and Seibenmann's work on arborescent knots \hyperref[B6]{[6]}, Oertel's work on star links \hyperref[B17]{[17]}, Adams' work on toroidally alternating links \hyperref[B2]{[2]}, Adams' et al. work on almost alternating links \hyperref[B4]{[4]}, Fa's initial cataloging of incompressible meridionally incompressible patterns \hyperref[B7]{[7]}, Lozanoand-Przytycki work on $3$-braid links \hyperref[B14]{[14]}, and Hass-Thompson-Tsvietkova results on growth of the number of essential surfaces in alternating link complements \hyperref[B9]{[9]}.

The definition of alternating link is described through diagrams, but characterizations without involving diagrams was found by Greene \hyperref[B8]{[8]} and Howie \hyperref[B10]{[10]} independently.  Howie and Purcell \hyperref[B12]{[12]} have proved various facts about the hyperbolic geometry of alternating links on surfaces, including weakly generalized alternating links described by Howie \hyperref[B11]{[11]}.

Some of these generalizations originated out of attempts to extend the results known for alternating links to broader classes of links.  Here is a list of some generalizations of alternating knots and links.\par

\begin{itemize}

\item almost alternating, m-almost alternating (Adams et al \hyperref[B4]{[4]})
\item toroidally alternating           (Adams \hyperref[B2]{[2]})
\item fully alternating                (Adams et al \hyperref[B3]{[3]})
\item adequate                         ((Lickorish‐Thistlethwaite \hyperref[B13]{[13]})
\item semi-alternating                  ((Lickorish‐Thistlethwaite \hyperref[B13]{[13]})
\item pseudo-alternating                     (Mayland‐Murasugi \hyperref[B15]{[15]})
\item algebraically alternating               (Ozawa \hyperref[B18]{[18]})
\item biperiodic alternating                   (Adams, Calderon, and Mayer \hyperref[B5]{[5]})
\item quasi-alternating                         (Ozsváth‐Szabó \hyperref[B19]{[19]})\\ \\
\end{itemize}

In \hyperref[B16]{[16]} Menasco showed the following theorem, known as the meridian lemma for alternating links, listed as below:\par

\begin{theorem}\label{meridian lemma}
If $L$ is a non-split prime alternating link, and if $F \subset S^{3}\setminus L$ is a closed incompressible surface, then $F$ contains a circle which is isotopic in $S^{3}\setminus L$ to a meridian of $L$.\par
\end{theorem}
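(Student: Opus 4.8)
The plan is to run Menasco's crossing-ball argument exactly as outlined in the Background. First I would replace the projection sphere $S^2$ by two spheres $S^2_+$ and $S^2_-$ together with one crossing ball at each crossing, so that $S^3\setminus(S^2_+\cup S^2_-)$ is the union of two open balls $B^3_\pm$ and the crossing balls, with $L$ lying on $S^2_+\cup S^2_-$ except where it runs over the crossing balls. Using that $F$ is closed and incompressible and that $L$ is non-split, I would isotope $F$ into normal position: $F$ meets each crossing ball in saddles, $F\cap B^3_\pm$ is a disjoint union of disks, and the s.c.c.'s of $F\cap S^2_\pm$ satisfy the usual minimality conditions (no innermost disk of $S^2_\pm$ cut off by such a curve is disjoint from $\pi(L)$, and so on). Two facts must be secured at this stage: that such a normal position exists, and that $F\cap S^2_\pm\neq\emptyset$, since otherwise $F$ would be confined to a single ball and would either compress or exhibit $L$ as split.

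With $F$ in normal position I would turn to the nesting behaviour described in the Background. Each s.c.c. of $F\cap S^2_+$ bounds disks on the sphere $S^2_+$, and tracking the boundary of a disk component of $F\cap B^3_+$ shows how it runs over a sequence of crossing balls through the tops of the saddles, while the saddles themselves glue the curves of $F\cap S^2_+$ to curves of $F\cap S^2_-$. The curves of $F\cap S^2_+$ that genuinely run over crossings then impose a nesting on a distinguished subset of the curves of $F\cap S^2_-$, and the combinatorial objective is to descend this nesting to an innermost s.c.c. $c$ that bounds an innermost disk $E$ on its sphere whose interior is disjoint from $F$ and meets $\pi(L)$ in as simple a pattern as possible.

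The crux, and the step I expect to be the main obstacle, is to show that this innermost curve must manifest a meridian, and this is precisely where the alternating and prime hypotheses are indispensable. Because the diagram is alternating, the over/under pattern along the boundary of every complementary region is rigid, so an innermost curve cannot be isotoped to lie harmlessly alongside a strand; it is forced either to run over a crossing ball through a saddle or to cross a single edge of $\pi(L)$ and drop to the opposite sphere. Carefully eliminating the remaining alternatives — trivial circles, circles cutting off a single crossing (a nugatory crossing, which cannot occur since $L$ is prime and alternating), and saddle patterns that would violate the minimality of the normal position — leaves only the picture in which $c$ bounds a disk $E$ whose interior is crossed by exactly one edge of $\pi(L)$. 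Capping $c$ off with the corresponding disk in $B^3_\pm$ then produces a $2$-sphere meeting $L$ in exactly two points, and primeness forces this twice-punctured sphere to bound a ball containing a single trivial strand, so that $c$ is isotopic in $S^3\setminus L$ to a meridian of $L$. The delicate bookkeeping throughout is matching the saddles against the alternating pattern so that the innermost curve can be taken to carry no crossing at all; this rigidity is exactly what fails for general diagrams and what the fully alternating hypothesis is designed to recover in the thickened-surface setting of Theorem \ref{mfa}.
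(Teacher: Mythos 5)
Your scaffolding --- crossing balls, normal position, nonemptiness of $F\cap S^2_\pm$, and the nesting that curves of $F\cap S^2_+$ impose on curves of $F\cap S^2_-$ --- is faithful to Menasco's argument, which is exactly the proof this paper relies on for this statement (the paper does not reprove it; it cites \hyperref[B16]{[16]} and instead develops a dome-based variant to prove Theorem \ref{mfa}). But the crux step, the very one you flag as the main obstacle, fails as written, for two concrete reasons. First, a curve $c$ of $F\cap S^2_\pm$ can never ``cross a single edge of $\pi(L)$'': $F$ is disjoint from $L$, and $L$ lies entirely in $S^2_+\cup S^2_-$ (edges on the common part of the two spheres, over- and understrands on the bubble hemispheres), so any edge of $\pi(L)$ entering the interior of a disk $E\subset S^2_\pm$ with $\partial E=c$ would have to intersect $c$ itself. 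Both your intermediate alternative (``cross an edge and drop to the opposite sphere'') and your final picture (``$E$ whose interior is crossed by exactly one edge'') are therefore impossible configurations; the only place $F$ comes near $L$ is at the saddles inside the crossing balls. Second, even granting your final picture, it proves the wrong thing: both punctures of the sphere $E\cup E'$ lie on $E$, so $c$ bounds the \emph{unpunctured} disk $E'\subset B^3_\pm$, hence $c$ is null-homotopic in $S^3\setminus L$ (it has linking number zero with $L$) and cannot be isotopic to a meridian, no matter what primeness says about the twice-punctured sphere. The boundary-parallelism of an inessential twice-punctured sphere would only yield a meridian for a curve that \emph{separates} the two punctures, i.e.\ one that already bounds a once-punctured disk on each side --- which is what was to be proved.

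The meridian in this technology manifests differently: it is a circle on $F$ built from an arc $\mu$ of $F$ outside a crossing ball together with an arc of a saddle $\sigma$ inside that ball, encircling the strand of $L$ that runs through the ball; such a circle bounds a once-punctured disk inside the bubble, and any circle bounding an embedded once-punctured disk is automatically isotopic in the complement to a meridian --- primeness plays no role at that step. This is precisely the ``middle rectangle'' configuration recorded in Lemma \ref{rec} of the paper. The hypotheses (alternating, prime, non-split) together with minimality of the pair $(s,i)$ defining normal position are used \emph{negatively}: one assumes $F$ is meridionally incompressible (Definition \ref{mi}) and shows that every possible configuration of an innermost curve (in the paper's language, of a bubble meeting a lowest dome) leads to a contradiction --- a sided rectangle can be pushed through the bubble reducing the number of saddles, a middle rectangle contradicts meridional incompressibility, and primeness/non-splitness exclude curves meeting the diagram too simply. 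So the logical shape is a contradiction argument terminating at a saddle, not an affirmative extraction of a meridian from a curve of $F\cap S^2_\pm$ that bounds a disk in a ball; your crux step cannot be repaired as stated and must be replaced by this saddle-based analysis.
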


The following definition was given in \hyperref[B16]{[16]} for the convienence of studying closed incompressible surfaces, or incompressible surfaces with meridional boundaries in the link complement. The reason for this definition is, we can cut a closed incompressible surface containing meridional curves along some non-parallel meridional curves, so that the surface becomes a union of punctured incompressible subsurfaces, each being incompressible and possessing a good property called \emph{meridionally incompressible}:\par

\begin{definition}\label{mi}
A surface $F$ in the complement of a link $L$ in a 3-manifold $M$ is called meridionally incompressible, if for each disk $D\subset M$ meeting $L$ transversely in one point, with $D\cap F=\partial D$, there is a disk $D'\subset F\cup L$ meeting $L$ transversely in one point, with $\partial D'= \partial D$. Otherwise, we call $F$ \emph{meridionally compressible}.
\end{definition}

The model and techniques in \hyperref[B16]{[16]} also applies to several other generalizations. In \hyperref[B4]{[4]} Adams et al. have shown a version of meridian lemma for the class of almost alternating knots. They showed that there is no closed incompressible meridionally incompressible surface in the complement of an almost alternating knot in $S^3$. Adams \hyperref[B2]{[2]} defined a broader class of knots that includes Montesinos knots and almost alternating knots, called toroidally alternating knots, which are often considered embedded in the lens space $L(p,q)$. It turns out when $p$ is odd, the meridian lemma also applies to toroidally alternating knots in $L(p,q)$, as a result of the following theorem:\par

\begin{theorem}

Let $K$ be a toroidally alternating knot in a genus one manifold $M_L=L(p,q)$. If there exists a closed orientable incompressible meridionally incompressible surface in $M_L\setminus K$, then it bounds a twisted $I$-bundle in $M_L\setminus K$.

\end{theorem}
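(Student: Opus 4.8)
The plan is to transplant Menasco's crossing-ball machinery from the $2$-sphere to the genus-one Heegaard torus of $M_L = L(p,q)$. Write $M_L = V_+ \cup_T V_-$ for the genus-one splitting, where $T$ is the Heegaard torus carrying the alternating projection $\pi(K)$. I would first replace $T$ by two parallel copies $T_+ = \partial V_+$ and $T_- = \partial V_-$, inserting a crossing ball at each crossing, so that $K \subset T_+ \cup T_-$ and $M_L$ cut along $T_+ \cup T_-$ is the union of the two solid tori $V_\pm$ and the crossing balls. The toroidally alternating hypothesis guarantees that every complementary region of $\pi(K)$ in $T$ is a disk, which is exactly the feature of the sphere case that makes the normal-position arguments run.

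Next I would isotope $F$ into normal position with respect to $T_\pm$, so that $F \cap (T_+ \cup T_-)$ is a finite collection of simple closed curves, $F$ meets each crossing ball in a standard family of saddles, and $F$ contains no curve bounding a crossing-free disk in a complementary region. Incompressibility of $F$ removes trivial curves of intersection and rules out saddle-free disks of $F$ lying in $V_\pm$. The essential new feature, relative to Menasco's original setting, is that a curve of $F \cap T_\pm$ need not bound a disk in $V_\pm$: regarded on the boundary of the solid torus it is either \emph{inessential}, bounding a disk in $V_\pm$, or \emph{essential}, running around $V_\pm$.

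For the inessential curves I would carry out the standard innermost-disk and outermost-arc analysis, using the alternating condition at each crossing to forbid the saddle patterns that would let an innermost curve cut off a crossing-free disk; this forces an innermost inessential curve to bound a meridian disk of $K$. Meridional incompressibility, in the sense of Definition \ref{mi}, then supplies a disk in $F \cup K$ meeting $K$ once and bounded by that meridian, and since $F$ is closed this lets me isotope $F$ to reduce the number of intersection curves. Iterating eliminates the inessential curves altogether, leaving a surface whose every intersection with $T_\pm$ is essential in the ambient solid torus.

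At this stage the genus-one structure takes over. Since the only incompressible surfaces in a solid torus with essential boundary are annuli with longitudinal, $\partial$-parallel cores, each component of $F \cap V_\pm$ is such an annulus, so $F$ hugs the Heegaard torus and is obtained by gluing these annuli across the crossing balls according to $\pi(K)$. I would then recognize the resulting configuration as the frontier of a regular neighborhood of a spanning surface of the diagram---a checkerboard-type surface assembled from the complementary disks, which must be non-orientable lest $F$ be boundary-parallel---so that $F$ bounds a twisted $I$-bundle in $M_L \setminus K$, as claimed. The main obstacle is precisely this last stage: in $S^3$ every curve inessential on the projection sphere bounds a disk inside a complementary ball, so the innermost-disk recursion always terminates and no surface survives, whereas in the solid tori the essential annuli can genuinely persist. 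Ruling out clasped or boundary-compressible assemblies of these annuli by the alternating condition together with meridional incompressibility, and then certifying that the surviving assembly is exactly a twisted $I$-bundle, is where the real work---and the eventual dependence on the parity of $p$---resides.
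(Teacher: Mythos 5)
A point of context first: the paper you are working from does not prove this theorem at all. It is quoted in the background section as a result of Adams \hyperref[B2]{[2]}, and the paper's own technical machinery (domes, lowest domes, Lemma \ref{rec}) is developed for the different setting of fully alternating links in thickened surfaces. So your proposal can only be measured against Adams' original argument, and in its early and middle stages it does follow that route: crossing balls on the Heegaard torus, normal position of $F$, the dichotomy between curves of $F\cap T_\pm$ that are inessential or essential on the torus, and the reduction of $F\cap V_\pm$ to boundary-parallel annuli are the right moves. Your observation that a curve missing every bubble would lie in a disk region (by the toroidally alternating hypothesis) is also the correct engine driving the curve-elimination stage.

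The decisive final step, however, contains a genuine error on top of being incomplete by your own admission (``where the real work resides''). You propose to recognize $F$ as the frontier of a regular neighborhood of a non-orientable, checkerboard-type \emph{spanning} surface of the diagram. A spanning surface $S'$ has $\partial S'=K$, so this cannot produce the object the theorem asserts: the $I$-bundle neighborhood $N(S')$ contains $K$ and hence does not lie in $M_L\setminus K$, while the side of $\partial N(S')$ away from $K$ is not an $I$-bundle in general; and if instead you excise $K$ and take $W=N(S')\setminus N(K)$, then $W$ is an $I$-bundle over a surface with nonempty boundary, i.e.\ a handlebody, whose boundary is compressible inside $W$ --- contradicting the hypothesis that $F$ is incompressible. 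The twisted $I$-bundle in the conclusion must be over a \emph{closed} non-orientable surface; this is exactly what makes the paper's corollary work, since closed non-orientable surfaces embed in $L(p,q)$ only when $p$ is even. This also shows that the parity of $p$ is a feature of the corollary, not of the proof of this theorem, which holds for all $p$ --- contrary to what your last sentence suggests. Two further soft spots: (i) when you invoke meridional incompressibility to ``reduce the number of intersection curves,'' the disk $D'\subset F\cup K$ only yields an isotopy after you argue that the sphere $D\cup D'$ bounds a ball meeting $K$ in an unknotted arc, which needs irreducibility of the lens space together with a primality-type argument; and (ii) incompressible $\partial$-parallel annuli in a solid torus need not have longitudinal slope, and since the slopes govern how the annuli can be assembled into a closed surface, ``longitudinal'' is not something you may assert for free.
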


\begin{corollary}
Let $K$ be a toroidally alternating knot in $S^3$ or $L(p,q)$ where $p$ is odd. Then $L(p,q)\setminus K$ contains no incompressible meridionally incompressible surfaces.\par

\end{corollary}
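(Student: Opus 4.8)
The plan is to combine the preceding Theorem with a $\mathbb{Z}/2$-homological obstruction to embedded closed non-orientable surfaces, the parity hypothesis on $p$ entering exactly through this obstruction. Write $M_L$ for $S^3$ or $L(p,q)$ with $p$ odd, and suppose for contradiction that $F\subset M_L\setminus K$ is a closed incompressible meridionally incompressible surface. I would first record the key computation: for such $M_L$ one has $H_2(M_L;\mathbb{Z}/2)=0$. For $S^3$ this is immediate. For $L(p,q)$ with $p$ odd, $H_1(L(p,q))=\mathbb{Z}/p$ gives $H_1(L(p,q);\mathbb{Z}/2)=\mathbb{Z}/p\otimes\mathbb{Z}/2=0$, and Poincaré duality together with the universal coefficient theorem then yields $H_2(L(p,q);\mathbb{Z}/2)\cong H^1(L(p,q);\mathbb{Z}/2)=0$.

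Next I would establish the obstruction itself: a connected closed surface $N$ embedded in a closed orientable $3$-manifold $M$ with $H_2(M;\mathbb{Z}/2)=0$ must be orientable. Indeed, in an orientable $3$-manifold a non-orientable surface is necessarily one-sided, so a regular neighborhood $\nu(N)$ is a twisted $I$-bundle whose boundary $\partial\nu(N)$ is the connected orientation double cover $\tilde N$. Using an orientation-reversing loop in $N$ together with a fiber of this bundle, one produces a loop $\gamma\subset M$ meeting $N$ transversely in exactly one point; the closing-up path can be taken on $\partial\nu(N)=\tilde N$, hence disjoint from $N$. Therefore the mod-$2$ intersection satisfies $\langle [N]_2,[\gamma]\rangle=1$, and by non-degeneracy of the mod-$2$ intersection pairing $[N]_2\neq 0$ in $H_2(M;\mathbb{Z}/2)$, contradicting $H_2(M;\mathbb{Z}/2)=0$. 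So $M_L$ admits no embedded closed non-orientable surface. The verification that $\gamma$ has odd intersection number is the one genuinely geometric point, and it is where I expect to have to be careful.

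Finally I would split into two cases. If $F$ is non-orientable, then $F$ itself is an embedded closed non-orientable surface in $M_L$, immediately contradicting the previous step. If $F$ is orientable, it satisfies the hypotheses of the Theorem, so $F$ bounds a twisted $I$-bundle $W\subset M_L\setminus K$; by the meaning of \emph{twisted}, the base (core) $N$ of $W$ is a closed non-orientable surface, and $N\subset W\subset M_L$ is embedded, again contradicting the previous step. In either case we reach a contradiction, so $M_L\setminus K$ contains no incompressible meridionally incompressible surface. The only real content beyond bookkeeping is the one-sidedness/intersection argument of the middle paragraph; the case split and the homology computation are routine.
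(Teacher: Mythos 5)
Your proposal is correct, and it is essentially the intended argument: the paper states this corollary without proof (it is background quoted from Adams' work on toroidally alternating knots, reference [2]), where it is meant to follow from the preceding theorem exactly as you argue --- an orientable surface would bound a twisted $I$-bundle whose core is a closed non-orientable surface embedded in $M_L$, and no such surface exists in $S^3$ or in $L(p,q)$ with $p$ odd because $H_2(\,\cdot\,;\mathbb{Z}/2)=0$. Two cosmetic points only: the non-orientability of the core follows from orientability of the ambient manifold (a twisted $I$-bundle over an orientable base has non-orientable total space), not from the word \emph{twisted} alone, and to conclude $[N]_2\neq 0$ you only need well-definedness of the mod-$2$ intersection pairing, not its non-degeneracy.
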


In addition, Ortel \hyperref[B17]{[17]} showed that the complement of a Montesinos knot or link $K$ contains a incompressible meridionally incompressible torus if and only if $K$ is a pretzel link $P(p,q,r,-1)$, where $\frac{1}{p}+\frac{1}{q}+\frac{1}{r}=1$. Ozawa \hyperref[B18]{[18]} showed that there is no closed incompressible meridionally incompressible surface in the complement of an algebraically alternating knot. We note that, the class of algebraically alternating links includes the class of Montesinos links defined by means of diagrams.

\section{Normal Position of Surfaces}\label{2}

Let $S$ be a closed orientable surface, $M=S\times I$ be a 3-manifold as the thickened surface, and $L \subset M$ be a link. We identify $S$ with $S\times \{ \frac{1}{2} \}$, then it has a projection $\pi : S\times I\to S$. Using the model in \hyperref[B16]{[16]}, we place a 3-ball at each crossing of $\pi(L)$, which we refer to as a \emph{bubble} $B$. See Figure \hyperref[the bubble]{1(a)}. At each crossing, both the overstrand and the understrand are in $\partial B$. We define $S_{+}$ to be the surface $S$ where the equatorial disk in each bubble is replaced by the upper hemisphere $\partial B_+$ of the bubble, and $M_{+}$ to be the 3-manifold bounded by $S_{+}$ and $S\times \{1\}$, so that $M_{+}$ does not intersect with the interior of any bubbles. Similarly, we can define $S_{-}$ and $M_{-}$ when replacing the equatorial disk in each bubble by a lower hemisphere $\partial B_-$ . \par

\begin{figure}[!htb]
\begin{subfigure}{.5\textwidth}
  \centering
  \includegraphics[width=.8\linewidth]{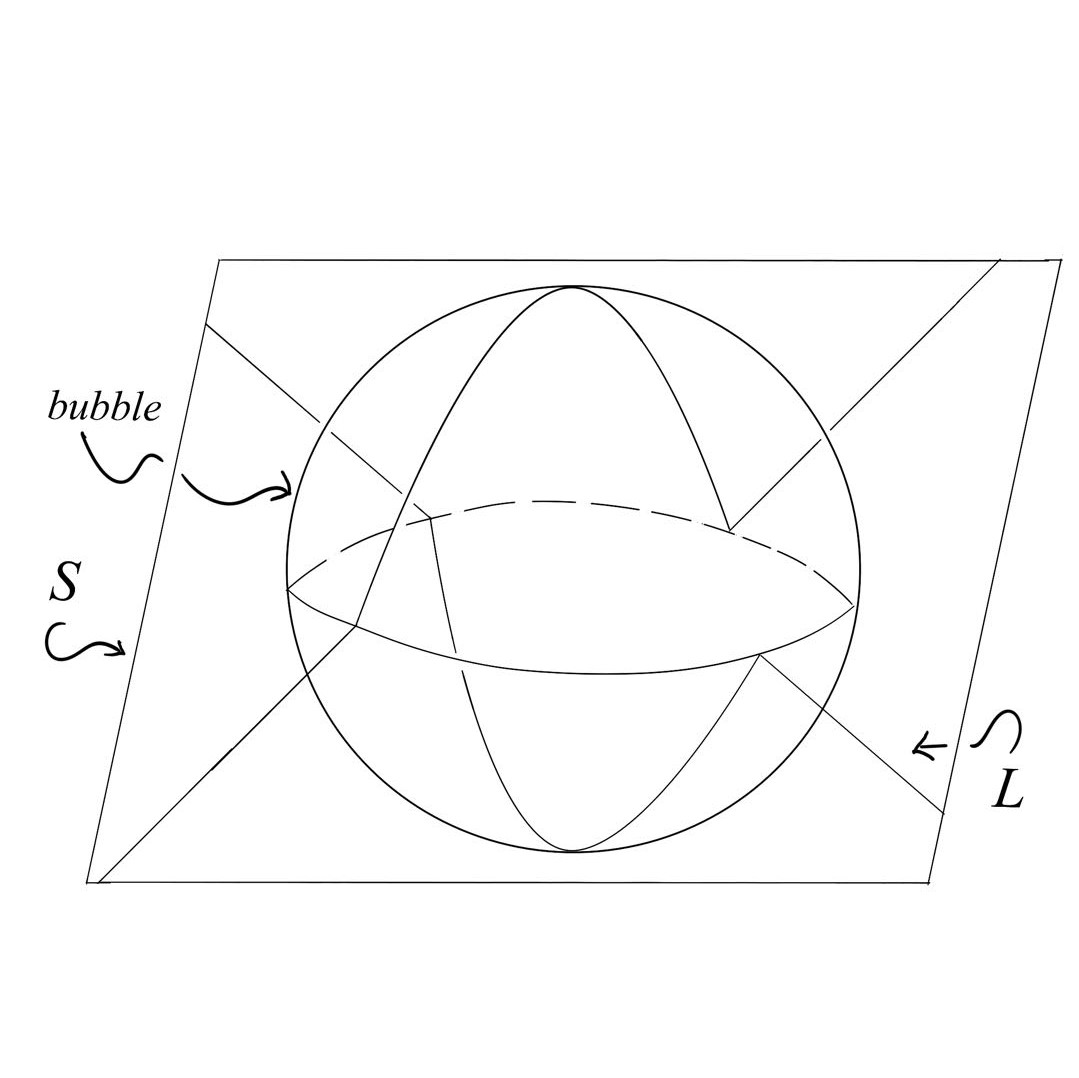}
  \caption{The bubble.}
  \label{the bubble}
\end{subfigure}
\begin{subfigure}{.5\textwidth}
  \centering
  \includegraphics[width=.8\linewidth]{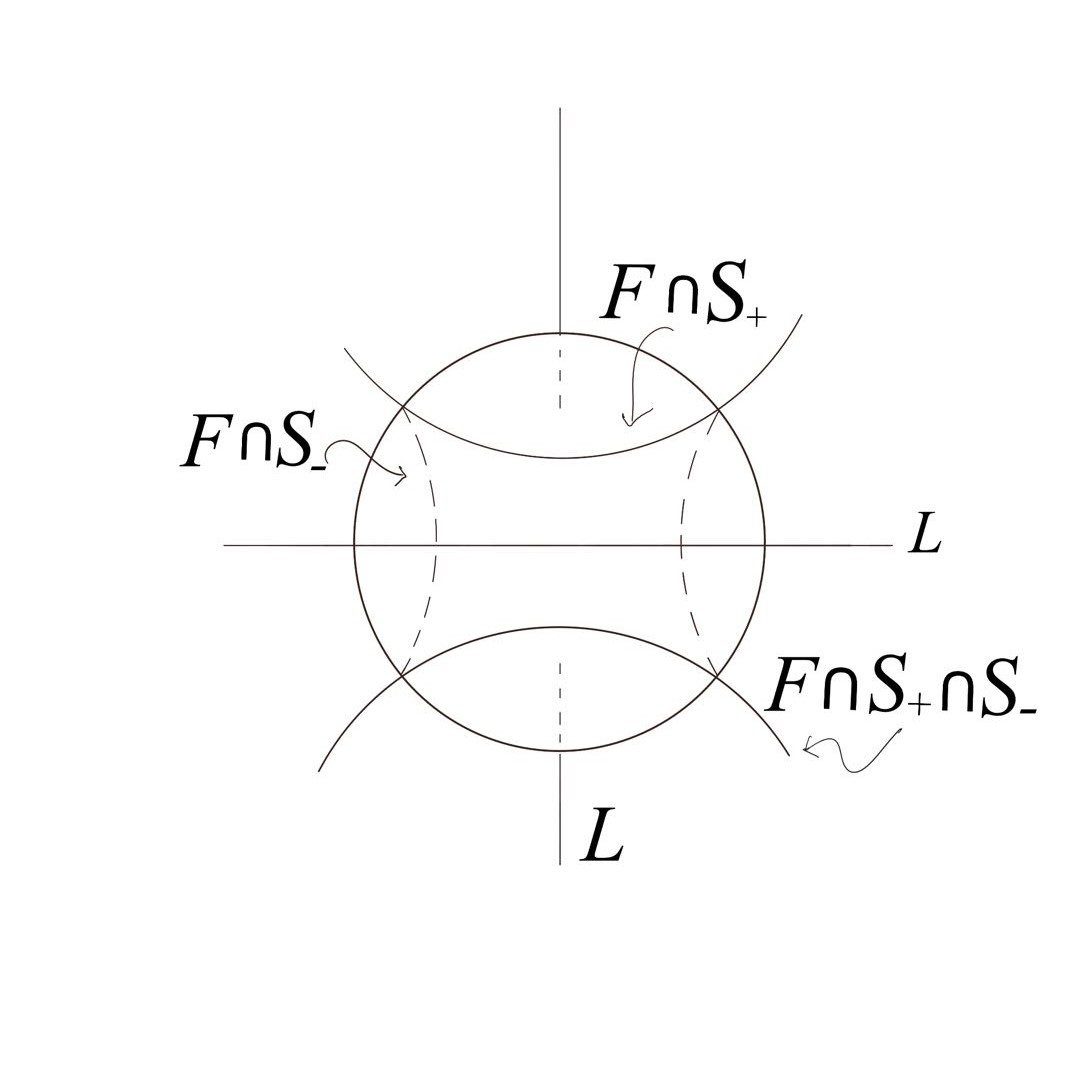}
  \caption{A local diagram of $F\cap S_{\pm}$.}
  \label{F0}
\end{subfigure}
\caption[The crossing ball.]{}
\label{Bubble}

\end{figure}

Let $F$ be an essential, meridionally incompressible surface embedded in $S\times I \setminus L$. We consider the following cases:\par

(1) $F$ is a sphere.\par
(2) There are no essential spheres in $S\times I \setminus L$, and $F$ is a closed, incompressible surface.

$F$ can be isotoped to intersect each bubble in a set of saddles. We associate an order pair $(s,i)$ to each embedding of $F$ prior to isotopy, in which $s$ is the number of saddles in $F$ and $i$ the number of intersection curves in $F\cap S_+$. For the rest of this paper, we assume the surface $F$ is connected and is chosen to minimize $(s,i)$ in lexicographical ordering. We say $F$ is in \emph{normal position} if the above assumptions are satisfied. We notice that, given a diagram of $F\cap S_{+}$, we can obtain its dual diagram of $F\cap S_{-}$ as shown in Figure \hyperref[F0]{1(b)}. \par

\section{Proofs}\label{3}

Denote each connected component of $F\cap M_{\pm}$ as $F_{i}^{\pm}$. Hereafter we refer to $S_+$ and $F\cap S_+$, but all arguments and constructions applies to both $F\cap S_+$ and $F\cap S_-$. Similarly, we refer to $M_+$ and each connected component $F_{i}^{+}\subset F\cap M_+$, but all arguments and constructions applies to $M_-$ and $F_{i}^{-}\subset F\cap M_-$. \par

\begin{proposition}\label{bopa}
Each connected component $F_{i}^{+}\subset F\cap M_+$ is incompressible, and  $\partial$-parallel to $S_+$ in $M_+$.\par

\end{proposition}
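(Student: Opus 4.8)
The plan is to establish both assertions by contradiction, pitting a compressing or boundary--compressing disk against the minimality of the pair $(s,i)$ that defines the normal position of $F$. Two structural facts drive everything. First, $M_{+}$, being the region between $S_{+}$ and $S\times\{1\}$, is homeomorphic to the product $S\times I$, with $S_{+}$ in the role of $S\times\{0\}$; in particular $M_{+}$ is irreducible and the boundary surface $S_{+}$ is \emph{incompressible} in $M_{+}$ (the inclusion $S_{+}\hookrightarrow M_{+}$ is a homotopy equivalence). Second, each $F_{i}^{+}$ is a compact surface properly embedded in $M_{+}$ with $\partial F_{i}^{+}\subset F\cap S_{+}$ lying entirely on $S_{+}$, since $F$ is disjoint from $\partial M=S\times\{0,1\}$ and its intersection with $L$ is empty (the interior of $M_{+}$ misses $L$). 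Showing $F_{i}^{+}$ is $\partial$-parallel to $S_{+}$ is exactly the statement that $F_{i}^{+}$ is a \emph{dome} in the sense of \S\ref{3}.

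First I would prove incompressibility. Suppose some $F_{i}^{+}$ admits a compressing disk $D\subset M_{+}$, so $\partial D$ is essential in $F_{i}^{+}$. Since $\mathrm{int}\,M_{+}$ is disjoint from $L$ we have $D\subset M\setminus L$, and incompressibility of $F$ in $M\setminus L$ forces $\partial D$ to bound a disk $E\subset F$. Because $\partial D$ is essential in $F_{i}^{+}$, the disk $E$ cannot be contained in $F_{i}^{+}$, so $E$ meets $S_{+}$ in circles other than $\partial E$. Taking a circle $c$ of $E\cap S_{+}$ that is innermost on $E$ yields a subdisk $E_{0}\subset E\subset F$ whose interior lies in a single one of $M_{\pm}$; as $S_{+}$ is incompressible in that product, $c$ bounds a disk $\delta\subset S_{+}$. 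Then $E_{0}\cup\delta$ bounds a ball, and pushing $F$ across it removes the intersection curve $c$, decreasing $i$ without increasing $s$ and contradicting minimality. Hence every $F_{i}^{+}$ is incompressible in $M_{+}$.

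Next I would upgrade this to $\partial$-parallelism. I would first show each (now incompressible) $F_{i}^{+}$ is $\partial$-\emph{incompressible} in $M_{+}$: a boundary--compressing disk $D$ has $\partial D=\alpha\cup\beta$ with $\alpha\subset F_{i}^{+}$ an essential arc and, since $\partial F_{i}^{+}\subset S_{+}$, with $\beta$ an arc on $S_{+}$; boundary--compressing along $D$ realizes an isotopy of $F$ that merges or deletes curves of $F\cap S_{+}$, again lowering $(s,i)$, a contradiction. Thus $F_{i}^{+}$ is incompressible and $\partial$-incompressible in $M_{+}\cong S\times I$. By the classification of such surfaces in a product, $F_{i}^{+}$ is isotopic to a horizontal surface, a vertical surface, or a $\partial$-parallel piece. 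A horizontal surface is closed and a vertical one meets both $S_{+}$ and $S\times\{1\}$; since $F_{i}^{+}$ has nonempty boundary confined to $S_{+}$, both are excluded, leaving only the $\partial$-parallel case. Therefore $F_{i}^{+}$ cobounds a product region with a subsurface of $S_{+}$, which is the desired conclusion.

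The hard part will not be the product classification, which is formal, but making the reductions above \emph{legitimate} in the presence of the crossing bubbles and of $L$ sitting on $S_{+}$. The disk $\delta\subset S_{+}$ produced in the incompressibility step, and the arc $\beta$ used in the boundary--compression, are only guaranteed to exist topologically; if $\delta$ contains a bubble, or $\beta$ crosses a strand of $L$, then pushing $F$ across would drag it through $L$ rather than simplify it, and the curve $c$ would instead be a candidate meridian. Ruling these configurations out — equivalently, arranging $\delta$ to be bubble--free and $\beta$ to avoid $L$ — is where the minimality of the normal position must be combined with the meridional incompressibility of $F$; the genus of $S_{+}$ (absent in Menasco's spherical case, where $S^{2}_{+}$ has no essential curves) is precisely what makes this bookkeeping the crux of the argument.
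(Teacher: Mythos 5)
Your argument for $\partial$-parallelism rests on an intermediate claim that is false, and in fact contradicts the very statement being proved. You assert that each incompressible $F_{i}^{+}$ is $\partial$-incompressible in $M_{+}$ and then invoke the vertical/horizontal classification. But a boundary-parallel surface that is not a disk is \emph{always} $\partial$-compressible: if $F_{i}^{+}$ is parallel to a subsurface of $S_{+}$ across a product region $R\cong F_{i}^{+}\times I$, then for any essential arc $\alpha\subset F_{i}^{+}$ the disk $\alpha\times I\subset R$ is a $\partial$-compressing disk. Since domes in a thickened surface are generally not disks (this is exactly what distinguishes this setting from Menasco's $S^{2}$ case), $\partial$-incompressibility cannot hold, so no minimality argument can establish it. Indeed your reduction miscounts the effect of the move: $\partial$-compressing along an arc whose endpoints lie on the \emph{same} curve of $\partial F_{i}^{+}$ splits that curve into two, \emph{increasing} $i$; only when the endpoints lie on different boundary curves does it merge curves. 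So the existence of a $\partial$-compressing disk produces no contradiction with normal position. The paper never claims $\partial$-incompressibility; after incompressibility it cites Lemma 2.5 of \hyperref[B20]{[20]}: in the compression body $M_{+}$ with $\partial M_{+}=S_{+}\cup (S\times\{1\})$, a properly embedded incompressible surface whose boundary misses $S\times\{1\}$ is $\partial$-parallel to $S_{+}$. That is the step your proposal needs and does not supply.

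The incompressibility half also has a gap, one you name yourself but do not close. Your innermost circle $c$ bounds $E_{0}\subset F$ and $\delta\subset S_{+}$, and you push $F$ across the ball bounded by $E_{0}\cup\delta$; but $\delta$ lies on $S_{+}$, exactly where $L$ and the bubbles sit, so the push may drag $F$ through $L$, and you leave ``ruling these configurations out'' as an unresolved crux. The paper avoids this entirely by using the case hypotheses built into normal position in \S\ref{2}: either $F$ is a sphere or there are no essential spheres in $S\times I\setminus L$. Consequently the sphere $D\cup D'$ (the compressing disk union the disk $D'\subset F$ bounded by $\partial D$) bounds a ball in the \emph{link complement}, and pushing $D'$ across that ball onto $D$ is an isotopy that never meets $L$ and eliminates all curves of $D'\cap S_{+}$, reducing $i$. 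No bubble or strand bookkeeping, and no appeal to meridional incompressibility, is needed for this proposition; meridional incompressibility enters only later, in Lemma \ref{rec}.
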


\begin{proof}
We first show that each connected component $F_{i}^{+}\subset F\cap M_{+}$ is incompressible in $M_{+}$. Suppose $F_{i}^{+}$ compresses in $M_{+}$. Then there exists a properly embedded compressing disk $D\subset M_+$ of $F_{i}^{+}$, with $\partial D\subset F_{i}^{+}$. Since $F\subset M$ is either a sphere or an incompressible surface,  $\partial D$ must bound a disk $D'$ in $F$, with $\partial D'=\partial D$. By the assumption that, in addition, there exists no essential spheres in $S\times I \setminus L$, we can isotope $D'$ in $S\times I \setminus L$ to $D$ while fixing its boundary. Suppose $D'\cap S_+=\emptyset$, then we could isotope $D$ to $D'$ in $M_+$ with $\partial D$ fixed in $F_{i}^{+}$. This would imply $D'\subset F_{i}^{+}$, contradicting the assumption that $D$ is a compressing disk of $F_{i}^{+}$. Therefore, $D'$ must intersect $S_+$ in a non-empty set of loop(s). Through a standard innermost loop argument, we can isotope $F$ to eliminate the loop(s) of $D'\cap S_+$. This means, we can reduce $i=|F\cap S_+|$ by at least one, contradicting the assumption $F$ is in normal position.\par

Now we have $M_{+}$ is a compression body, $\partial M_{+}= S_+ \cup  S \times \{1\}$, $(F_{i}^{+},\partial F_{i}^{+})\subset (M_{+},\partial M_{+})$, $F_{i}^{+}$ is incompressible, and $\partial F_{i}^{+} \cap S \times \{1\}=\emptyset$. Therefore, by a probably well-known result stated as Lemma 2.5 in \hyperref[B20]{[20]}, $F_{i}^{+}$ is $\partial$-parallel to $S_+$ in $M_+$.

\par

\end{proof}

We call each $F_{i}^{\pm}$ a \emph{dome}. As a result of the above proposition, it is reasonable to define $F_{i}^{+}$ (or $F_{i}^{-}$) as a \emph{lowest dome} if there are no component(s) of $F\cap M_{+}$ (resp., $F\cap M_{-}$) embedded in between the cobordism bounded by $F_{i}^{+}$ and $S_+$ (resp., $F_{i}^{-}$ and $S_-$). To compare this generalization with the original technique in \hyperref[B16]{[16]}, we notice if we take the thickened surface $S\times I$ as $S^2\times I$, and each simple closed curve of $F\cap S^2_{\pm}$ bounds a dome of disk . We will see our approach to proving Theorem \ref{mfa} can also be applied in \hyperref[B16]{[16]}. \par

When $F$ is in normal position, the following lemma guarantees the existence of nontrivial intersections $F\cap S_{+}$:\par

\begin{lemma}\label{fasp}
There exists an isotopy of $F$ such that the set of intersection curves $F\cap S_{+}$ is nonempty, and every intersection curve in $F\cap S_{+}$ intersects at least one bubble.\par

\end{lemma}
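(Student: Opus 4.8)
The plan is to prove Lemma~\ref{fasp} by ruling out, via the minimality of normal position, the two ways it could fail: that $F\cap S_+$ is empty, or that some intersection curve avoids all bubbles. I would argue each failure mode leads either to a reduction of the complexity pair $(s,i)$ or to a contradiction with the fact that $F$ is essential (not boundary-parallel) in $S\times I\setminus L$.

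First I would dispose of the case $F\cap S_+=\emptyset$. If $F$ misses $S_+$ entirely, then $F$ lies wholly in $M_-$ (or, symmetrically, in $M_+$), since the complement of $S_+$ in $M$, away from the bubble interiors, is exactly $M_+$ together with the bubble region, and the alternating projection structure forces $F$ into one side. By Proposition~\ref{bopa}, applied to this single component, $F$ would then be $\partial$-parallel to $S_-$ in $M_-$; but $S_-$ is isotopic in $S\times I\setminus L$ to the level surface $S\times\{1/2\}$ pushed off $L$, so this would make $F$ boundary-parallel, or more precisely isotopic to a level surface, contradicting that $F$ is essential (and in particular that $F$ is not isotopic into a region disjoint from $L$). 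This is the step where I would need to be careful that ``$F$ avoids $S_+$'' genuinely traps $F$ on one side, and that the resulting parallelism contradicts essentiality rather than merely incompressibility; I expect to invoke the positive-genus/essential hypotheses here.

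Next, assuming $F\cap S_+\neq\emptyset$, I would handle curves of $F\cap S_+$ that meet no bubble. Such a curve $c$ lies in a single complementary region of the projection $\pi(L)$, which by the fully alternating hypothesis is an open disk. Thus $c$ bounds a disk $D$ in $S_+$ with $D$ disjoint from all bubbles and from $L$. Taking $c$ innermost on $S_+$ among such bubble-free curves, $D$ contains no other curve of $F\cap S_+$ in its interior, so $D$ is a disk whose boundary lies on $F$ and whose interior misses $F$. Since $F$ is incompressible, $c$ bounds a disk $D'$ in $F$; capping off and using the absence of essential spheres, $D\cup D'$ is an inessential sphere, and I can isotope $F$ across $D$ to remove the curve $c$. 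This isotopy strictly decreases $i=\abs{F\cap S_+}$ without increasing the saddle count $s$, contradicting minimality of $(s,i)$. The main obstacle I anticipate is making the innermost-disk isotopy rigorous in the bubble model: I must verify that pushing $F$ across $D$ does not create new saddles or new intersection curves elsewhere (so that $s$ is unchanged and $i$ strictly drops), which requires that $D$ meet no bubble and lie cleanly to one side, exactly the content guaranteed by choosing $c$ innermost and bubble-free.

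Combining the two cases gives the lemma: after the finitely many reductions terminate (they must, since $(s,i)$ is bounded below in lexicographic order), $F\cap S_+$ is nonempty and every remaining curve meets at least one bubble. I would close by remarking that the identical argument applies verbatim to $S_-$, consistent with the convention stated at the opening of \S\ref{3} that all constructions for $S_+$ transfer to $S_-$.
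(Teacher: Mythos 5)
Your overall strategy is the right one, and it is essentially the argument that the paper outsources to the literature: the paper's entire proof of Lemma~\ref{fasp} is the citation ``the proof of Lemma 5 (i), (ii) in [3] applies,'' and that proof runs exactly along your lines --- emptiness of $F\cap S_+$ traps $F$ in a product region and contradicts essentiality, while a bubble-free curve lies in a disk complementary region (this is where full alternation enters) and is removed by an innermost-disk isotopy violating minimality of $(s,i)$.

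There is, however, one genuine gap. Your second step invokes ``the absence of essential spheres'' to conclude that $D\cup D'$ bounds a ball. That assumption is available only in case (2) of the setup in \S\ref{2}; in case (1), $F$ is itself a (possibly essential) sphere, and no irreducibility of $S\times I\setminus L$ may be assumed --- indeed, the proof of Theorem~\ref{mfa} ends by \emph{deducing} the nonexistence of essential spheres from this very machinery, so assuming it inside Lemma~\ref{fasp} is circular precisely in the case the paper needs. Unlike Menasco's setting, where non-splitness of $L\subset S^3$ gives irreducibility of the complement for free, the sphere case here must be handled directly: for instance, choose $F$ to minimize $(s,i)$ among \emph{all} essential spheres, note that $c$ bounds two disks $D'$ and $D''$ on $F$, and observe that if both spheres $D\cup D'$ and $D\cup D''$ bounded balls in $S\times I\setminus L$, then $F$ itself would bound a ball (whether the two balls are disjoint or nested); hence one of them is an essential sphere with strictly smaller $(s,i)$, a contradiction. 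Two smaller points you should also repair: (a) in your first case, what traps $F$ in $\mathrm{int}(M_+)$ or $\mathrm{int}(M_-)$ is not ``the alternating projection structure'' but normal position --- $F$ meets bubbles only in saddles, and every saddle meets $S_+$, so $F\cap S_+=\emptyset$ forces $F$ to miss all bubbles; as written, your description of the complement of $S_+$ omits $M_-$ altogether; (b) the isotopy of $D'$ across the ball to $D$ requires the ball's interior to be disjoint from $F$; this holds because $F\setminus D'$ is connected and disjoint from the sphere $D\cup D'$, and if it lay inside the ball then the closed essential surface $F$ would be contained in a $3$-ball, which is impossible --- but this deserves a sentence rather than silence.
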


\begin{proof}
This is true because the proof of Lemma 5 (i), (ii) in \hyperref[B3]{[3]} applies.
\par

\end{proof}

Let $F_i$ denote a lowest dome of $F\cap M_+$, $M_i^+$ denote the cobordism bounded by $F_{i}^+$ and $S_+$, and $\partial B_+$ denote the upper hemisphere of a bubble $B$. Notice that as $F$ passes through a bubble $B$, the saddle corresponds to two intersection curves on $S_+$ that run parallel to the overstrand of $B$. $\partial B_+$ is divided by an overstrand of $L$ into two sides.  We proof the following technical lemma:\par

\begin{lemma}\label{rec}
Suppose $B$ is a bubble that intersects with a lowest dome $F_{i}^+$, then $M_i^+ \cap \partial B_+$ does not consists of any rectangle, whose boundary consists of two arcs of $F_{i}^+\cap \partial B_+$, and two arcs on the boundary of $\partial B_+$.

\end{lemma}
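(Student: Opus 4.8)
The plan is to argue by contradiction from the minimality of the pair $(s,i)$ that defines normal position. Suppose some component of $M_i^+\cap\partial B_+$ were a rectangle $R$ whose boundary is $\alpha_1\cup\beta_1\cup\alpha_2\cup\beta_2$, where $\alpha_1,\alpha_2\subset F_i^+\cap\partial B_+$ run parallel to the overstrand and $\beta_1,\beta_2$ lie on the equator $\partial(\partial B_+)$. By Proposition \ref{bopa} the lowest dome $F_i^+$ is $\partial$-parallel to $S_+$, so $F_i^+$ and its footprint $G=M_i^+\cap S_+$ cobound the product region $M_i^+\cong G\times I$; because $F_i^+$ is lowest, the interior of $M_i^+$ is disjoint from $F$. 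The rectangle $R$ is a subrectangle of $G$, and it lifts through the product structure to a parallel rectangle $R'\subset F_i^+$, the two cobounding a ball $R\times I\subset M_i^+$ whose interior misses $F$.

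First I would record the local picture inside $B$. The arcs $\alpha_1,\alpha_2$ are parallel to the overstrand precisely because they are top arcs produced by the saddle(s) of $F$ in $B$; thus each $\alpha_j$ is joined, through a saddle of $F\cap B$, to an arc on $\partial B_-$ parallel to the understrand. I would then glue the rectangle $R'$ on the dome to the saddle disk(s) along $\alpha_1\cup\alpha_2$ to produce an embedded disk (or annulus) $\Sigma=R'\cup(\text{saddle})$ whose remaining boundary lies off $S_+$, together with the clean ball $R\times I$ supplied by the product structure.

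Next I would use $\Sigma$ and the ball $R\times I$ to build an isotopy of $F$, supported in a neighborhood of $R\times I$ together with the bubble $B$, that pushes this part of $F$ across $B$. Carried out carefully this move either cancels a saddle of $F$ in $B$, strictly lowering $s$, or, when no saddle is destroyed, pulls the two arcs $\alpha_1,\alpha_2$ off the hemisphere across the equatorial arcs $\beta_1,\beta_2$ and so strictly lowers $i=|F\cap S_+|$ while leaving $s$ unchanged. In either case $(s,i)$ drops in lexicographic order, contradicting the assumption that $F$ is in normal position, and the lemma follows.

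The main obstacle is making the isotopy legitimate. Because the $\alpha_j$ hug the overstrand---and in the straddling configuration the rectangle runs on both sides of it---I must keep the entire isotopy inside $M\setminus L$, routing the push around the endpoints of the overstrand on the equator rather than through $L$. Here the lowest-dome hypothesis is essential: it guarantees that $R\times I$ has interior disjoint from $F$, so no other sheet of $F$ blocks the move, and it is what forces the reduction to be genuine rather than merely trading one saddle or intersection curve for another. Verifying that the resulting surface is still connected and that the count $(s,i)$ strictly decreases is the delicate bookkeeping I expect to occupy the bulk of the argument.
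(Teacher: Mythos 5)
Your treatment of the sided rectangle (the case where $R$ misses the overstrand) is essentially the paper's own argument: the lowest-dome hypothesis keeps the interior of $R$ free of other sheets of $F$, so the adjacent saddle(s) and a band of $F_i^+$ can be pulled through the bubble and out the other side, dropping $s$ and contradicting normal position. That half of your plan is sound.

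The gap is in the straddling case, which the paper calls a middle rectangle. You propose to handle it by the same isotopy, ``routing the push around the endpoints of the overstrand on the equator rather than through $L$.'' This cannot be done: the overstrand has no endpoints in $M\setminus L$ --- at the equator of $\partial B_+$ it continues as part of the link into the projection surface, so there is no gap to route through. More decisively, in the straddling configuration the arc $\mu \subset F_i^+$ running over the overstrand (across the lift of $R$ into the dome), together with an arc in the saddle $\sigma$ running under it (the saddle separates overstrand from understrand inside $B$), forms a closed curve $\sigma \cup \mu$ on $F$ that links the overstrand once; by the lowest-dome hypothesis it bounds a disk meeting $L$ exactly once with interior disjoint from $F$. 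A curve of $F$ that links $L$ cannot be eliminated, nor can that part of $F$ be pushed off the bubble, by any isotopy in $M\setminus L$, so no reduction of $(s,i)$ is available --- indeed, essential surfaces that do contain meridians genuinely exhibit middle rectangles, so minimality alone can never exclude them. The paper instead rules out this case using the standing hypothesis (set in \S\ref{2}, and central to the proof-by-contradiction of Theorem \ref{mfa}) that $F$ is meridionally incompressible: the curve $\sigma \cup \mu$ is precisely a meridional curve on $F$, which is the desired contradiction. This is the crux of the entire argument --- the middle rectangle is exactly where the meridian of Theorem \ref{mfa} comes from --- and your proposal, which never invokes meridional incompressibility, cannot close this case.
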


\begin{proof}

If $M_i^+ \cap \partial B_+$ consists of a rectangle $R$ shown in the below Figure \hyperref[f1]{2(a)}, which we call a sided rectangle, as $R$ does not contain the link strand on $\partial B_+$. Then we can pull a neighborhood of an arc on $F_{i}^+$ to the bubble to form a band connecting the pair of saddles intersecting $R$. And since $F_{i}^+$ is a lowest dome, there is no arc intersection of $F\cap S_+$ in the interior of $R$. So we can pull the two saddles and the band through the bubble and out the other side of $\partial B_+$, as shown in Figure \hyperref[f3]{3}. This would decrease the number of saddles, contradicting $F$ is in normal position.
 \par

If $M_i^+ \cap \partial B_+$ consists of a rectangle $R$ shown in Figure \hyperref[f2]{2(b)}, which we call a middle rectangle, as it contains the link strand on $\partial B$. This would contradict the assumption $F$ being meridionally incompressible. Because $F_{i}^+$ is a lowest dome, there is no arc intersection of $F\cap S_+$ in the interior of R. And we can find an arc $\mu$ on $F_{i}^+$ so that $\partial \mu$ is contained in a saddle $\sigma$ in $B$, which means $\sigma \cup \mu$ contains a meridional curve of $F$. \par

\end{proof}

\begin{figure}[!htb]
\begin{subfigure}{.5\textwidth}
  \centering
  \includegraphics[width=.8\linewidth]{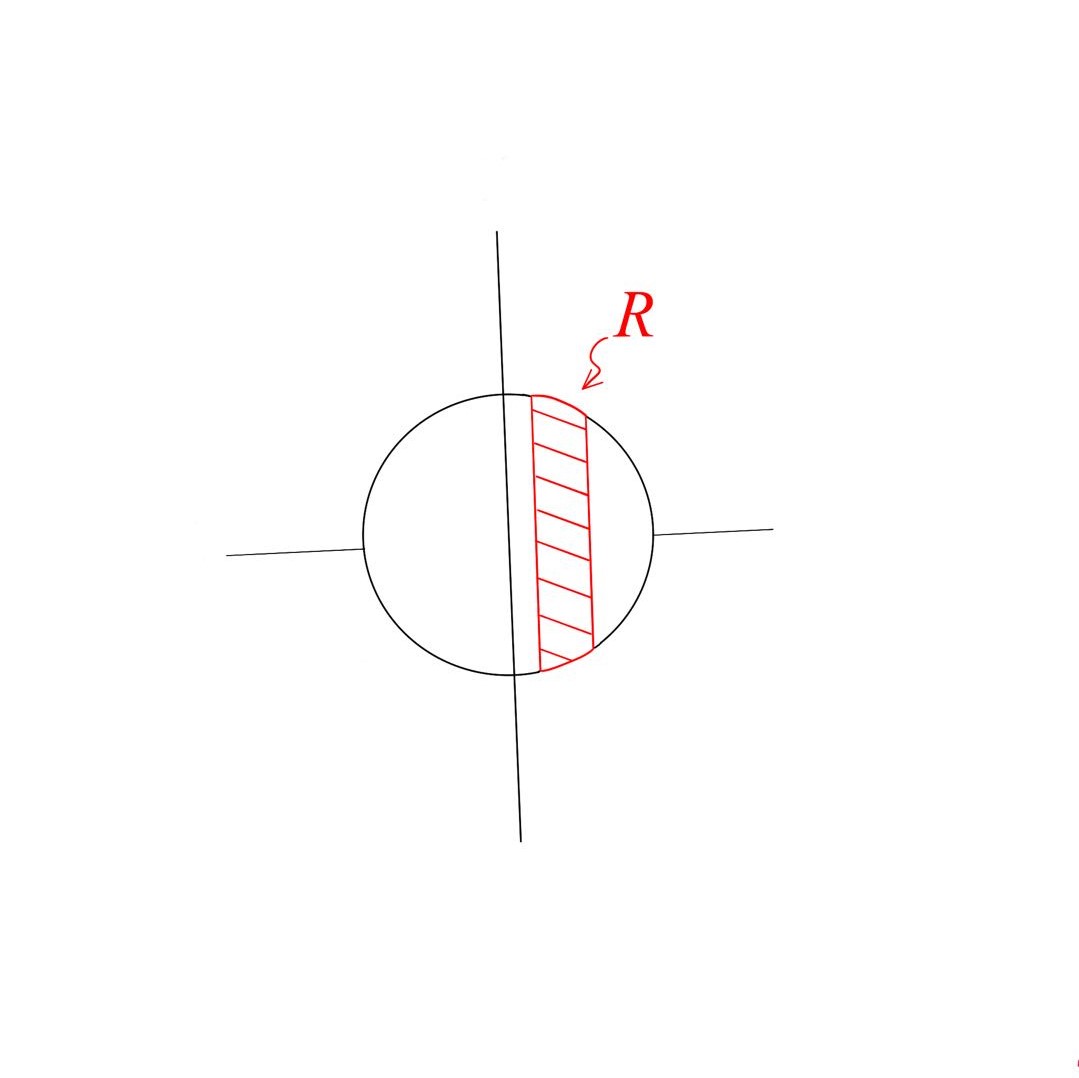}
  \caption{A sided rectangle.}
  \label{f1}
\end{subfigure}
\begin{subfigure}{.5\textwidth}
  \centering
  \includegraphics[width=.8\linewidth]{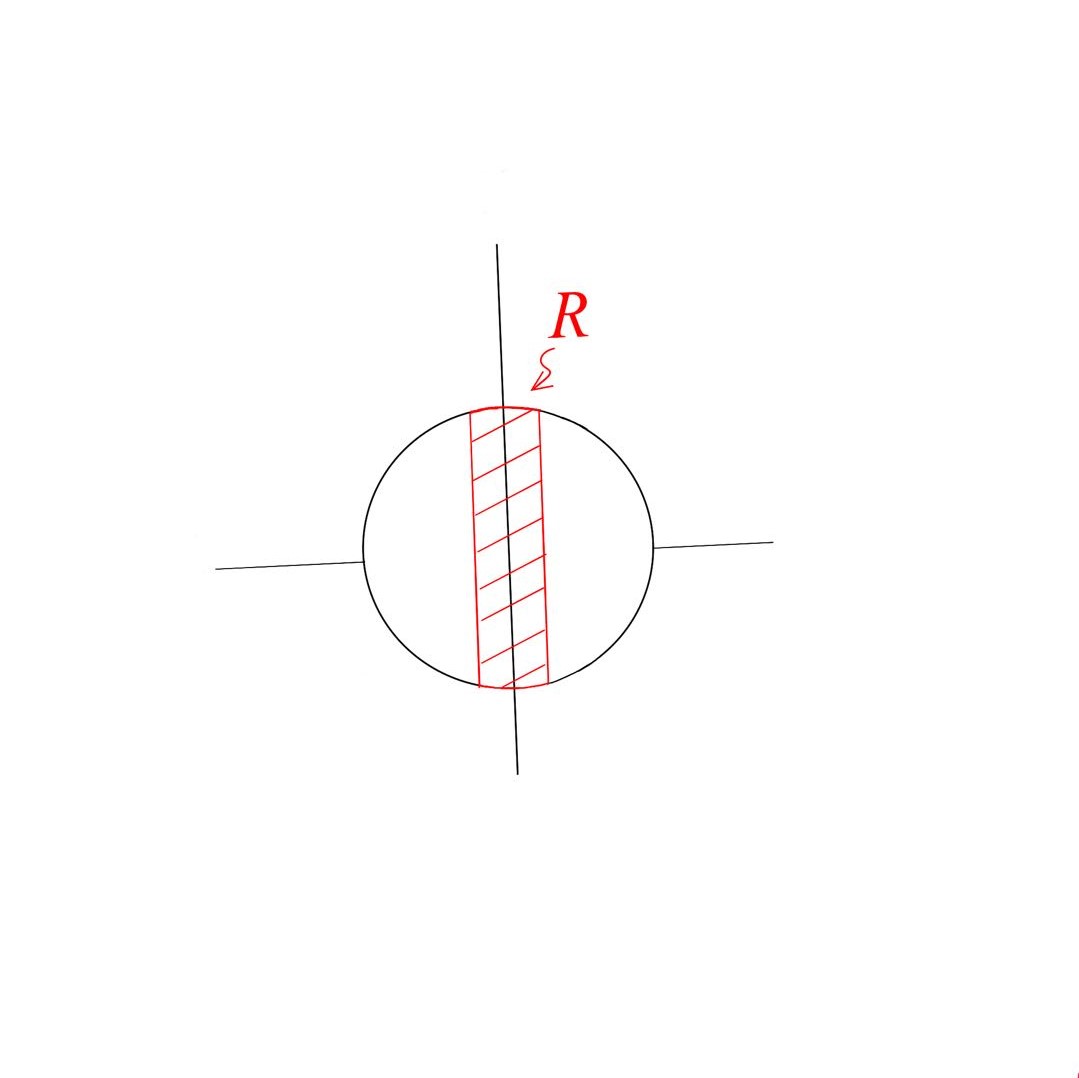}
  \caption{A middle rectangle.}
  \label{f2}
\end{subfigure}
\caption{The red shadowed parts represent subsets of $M_i^+\cap S_+$.}
\label{F}

\end{figure}

\begin{figure}[!htb] 
\centering
\includegraphics[width=0.25\textwidth]{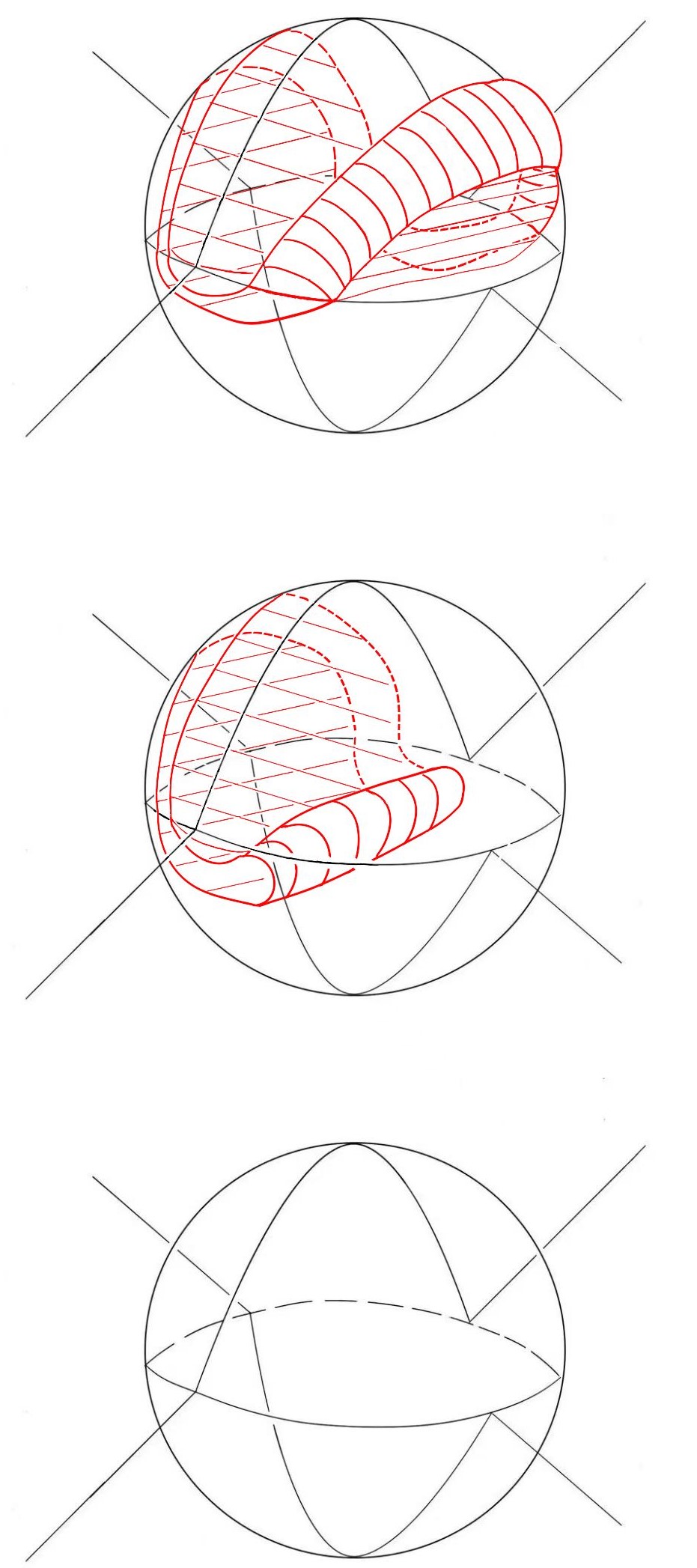}
\caption{Eliminate a sided rectangle.}
\centering

\label{f3}
\end{figure}

\subsection*{Proof of Theorem \ref{mfa}}

\begin{proof}
Assume by contradiction that the projection $\pi(L)$ of $L$ on $S$ is fully alternating, and $F$ is a closed essential meridionally incompressible surface embedded in $S\times I\setminus L$. We put $F$ in normal position. According to Lemma \ref{fasp}, $F\cap M_+$ is nonempty. And, by proposition \ref{bopa}, each connected component of $F\cap M_+$ is incompressible and $\partial$-parallel to $S_+$ in $M_+$. Hence, we can consider a lowest dome $F_{i}^+$ and the cobordism $M_i^+$ bounded by $F_{i}^+$ with $S_+$. By Lemma \ref{fasp}, every intersection curve in $F\cap S_{+}$ intersects at least one bubble. So we can assume there is a simple closed curve $C \subset F_{i}^+ \cap S_+$ that intersects with a bubble $B$.\par

We now claim that $M_i^+ \cap \partial B_+$ does not contain the overstrand on $\partial B_+$. Because suppose the overstrand of $\partial B_+$  is contained in $M_i^+ \cap \partial B_+$, by the assumption that $F_{i}^+$ is a lowest dome, $F_{i}^+$ would have to meet both sides of $\partial B_+$. Thus $M_i^+ \cap \partial B_{+}$ would consist of a middle rectangle, which would contradict Lemma \ref{rec}. But by alternating property, it follows that there must exist an arc $\alpha \subset C$ passes through one side of $\partial B_+$ such that the overstrand is on the right (similarly left), and then passes through the same side of $\partial B_+$
 such that the overstrand is on the left (similarly right), with $M_i^+ \cap \partial B_{+}$ consisting of a sided rectangle between the two passes. By Lemma \ref{rec}, this would contradict the assumption $F$ is in normal position. We note that the above proof also implies there does not exist any essential spheres in $S\times I\setminus L$.

\end{proof}

\section{Questions}
Given a certain compact 3-manifold, what is the class of knots or links in it, that a meridian lemma can apply to?  Is there a method to prove similar results without involving the projection surface, or a diagrammatic argument? \par

\section*{Acknowledgement}

I would like to thank William Menasco for his suggestions and many helpful discussions.

\end{document}